\newcommand{\R}{\mathbb{R}}
\newcommand{\N}{\mathbb{N}}
\newcommand{\calx}{\mathcal X}
\newcommand{\calh}{\mathcal H}
\newcommand{\fa}{\hbox{ for all }}
\newcommand{\inner}[2]{\ifthenelse{\equal{#2}{}}{\left\langle\cdot,\cdot\right\rangle_{#1}}{\left\langle#2\right\rangle_{#1}}}
\newcommand{\norm}[2]{\ifthenelse{\equal{#2}{}}{\left\|\cdot\right\|_{#1}}{\left\|#2\right\|_{#1}}}
\newcommand{\seminorm}[2]{\ifthenelse{\equal{#2}{}}{\left|\cdot\right|_{#1}}{\left|#2\right|_{#1}}}
\newcommand{\nsx}{\mathcal H_k(\calx)}
\DeclareMathOperator{\conv}{aco}
\DeclareMathOperator{\Lip}{Lip}
\DeclareMathOperator{\Span}{span}
\DeclareMathOperator*{\argmin}{arg\min}
\DeclareMathOperator*{\argmax}{arg\max}
\newtheorem{theorem}{Theorem}
\newtheorem{lemma}[theorem]{Lemma}
\newtheorem{definition}[theorem]{Definition} 
\newtheorem{remark}[theorem]{Remark}
\title{\vspace{-2.2cm}Refined rates of convergence for target-data dependent greedy generalized interpolation with Sobolev kernels}
\author{
Bernard Haasdonk \thanks{Institute of Applied Analysis and Numerical Simulation, University of Stuttgart, Germany},
Gabriele Santin \thanks{Department of Environmental Sciences, Informatics and Statistics, Ca' Foscari University of Venice, Italy},
Tizian Wenzel \thanks{Department of Mathematics, Ludwig-Maximilians-Universität München, Germany},
Daniel Winkle \thanks{Institute for Stochastics and Applications, University of Stuttgart, Germany}
}
\date{\today}
\begin{document}
\maketitle

\vspace{-0.7cm}\begin{abstract}
Greedy methods have recently been successfully applied to generalized kernel interpolation, or the recovery of a function from data stemming from the
evaluation of linear functionals, including the approximation of solutions of linear PDEs by symmetric collocation.

When applied to kernels generating Sobolev spaces as their native Hilbert spaces, some of these greedy methods can provide the same error guarantee of
generalized interpolation on quasi-uniform points. More importantly, certain target-data-adaptive methods even give a dimension- and 
smoothness\hyp{}independent 
improvement in the speed of convergence over quasi-uniform points, thus offering advantages for
high-dimensional problems.

These convergence rates however contain a spurious logarithmic term that limits this beneficial effect. The goal of this note is to remove this factor, and 
this is possible by using estimates on metric entropy numbers.
\end{abstract}

\section{Introduction}
We consider a strictly positive definite kernel $k:\calx\times\calx\to \R$ on a set $\emptyset\neq\calx\subset\R^d$, i.e., a symmetric function for which the 
matrix $(k(x_i, x_j)))_{i,j=1}^n\in\R^{n\times n}$ is positive definite for any set of pairwise distinct points $\{x_1, \dots, x_n\}\subset \calx$. Associated 
to this kernel there is a unique reproducing kernel Hilbert space $\nsx$~\cite{Saitoh2016}, also known as the kernel's native space, that is a Hilbert space 
of real-valued functions on $\calx$, where $k(\cdot, x)$ is the Riesz representer of the point evaluation functional at $x\in\calx$.
Of particular interest for this work are Sobolev kernels on sufficiently smooth bounded domains $\calx\subset\R^d$, namely those for which $\nsx$ is norm 
equivalent to $W_2^\tau(\calx)$, the $L_2$-Sobolev space of smoothness $\tau>d/2$. Concrete examples of such kernels are the families of Mat\'ern and Wendland 
kernels~\cite{Porcu2024,Wendland1995a}. 

Kernels can be used to interpolate functions from scattered point evaluations, yielding methods that have convenient computational properties and rigorous error estimates~\cite{Wendland2005}.
These theoretical and practical guarantees can be extended to generalized interpolation, where the data are given by the evaluation of finitely many linear functionals on the unknown solution.
A remarkable instance is approximating the solution of Partial Differential Equations (PDEs) by symmetric collocation~\cite{Fasshauer1997}, where the generalized interpolant is the function of minimal norm that satisfies the PDE, in the strong sense, in a finite number of points.

For both standard and generalized interpolation (or symmetric collocation), uniform point locations give worst-case optimal error rates in Sobolev spaces. 
Nevertheless, the error bounds and the computational efficiency of the methods are highly influenced by the number and location of the interpolation points.

To tackle this aspect, and building on earlier instances developed for pure interpolation~\cite{DeMarchi2005,SchWen2000,Mueller2009,Wenzel2022c}, recent works have extended greedy algorithms to generalized interpolation~\cite{Schaback2019a,Wenzel2025,Albrecht2024}.
These methods avoid an expensive global optimization of the interpolation points, and instead pick them incrementally by optimizing a selection rule.
For plain interpolation, the resulting approximants are known to have optimal error decay in Sobolev spaces~\cite{Santin2017x,Wenzel2021a,Wenzel2022c} and, when using adaptive selection rules, to even provide a dimension- and smoothness-independent improvement in the rate of convergence over interpolation on uniform points~\cite{Wenzel2022c,Santin2024a}.

These theoretical guarantees have recently been extended to symmetric collocation in~\cite{Wenzel2025}, with the adaptive schemes being particularly attractive 
for approximating the solution of high-dimensional PDEs.
However, due to a technical detail in the proof strategy, these convergence rates contain a logarithmic term that appears to be suboptimal, and our main result 
(Section~\ref{sec:convergence}, Theorem~\ref{th:theorem}) shows that this term can indeed be removed.
As our approach is general and not tied specifically to PDEs, we will formulate all results for generalized interpolation at large, as detailed in 
Section~\ref{sec:sym_coll}, and comment on the case of symmetric collocation of PDEs along the way. These advancements are possible by resorting to some 
properties of metric entropy numbers, for which the necessary background is introduced in Section~\ref{eq:entropy_numbers}. As the scope of this note is only 
theoretical and we do not introduce algorithmic novelties, we refer to the extensive experiments published in~\cite{Wenzel2025} for the numerical validation of 
the method.

\section{Generalized interpolation and the PDE-greedy algorithms}\label{sec:sym_coll}
We give a brief account of generalized kernel interpolation specifically focusing on Sobolev kernels, and refer to Chapter 16 in~\cite{Wendland2005} or 
to~\cite{Wenzel2025} for a 
detailed treatment.

Let $\calx\subset\R^d$ be a bounded set with a Lipschitz boundary, and for $I\in\N$ and $i\in\{1, \dots, I\}$, let $\calx_i \subset\calx$ be a bounded 
subset of $\R^{d}$ or a smooth compact manifold of dimension $d_i\leq d$~\cite{Lee2012}, with $d, d_i\in\N$.
Let $m_i\geq 0$ and $\tau > m_i + d_i/2$ for $i=1, \dots, I$, and consider linear and bounded (differential) operators $L_i: W_2^\tau(\calx) \to 
W_2^{\tau-m_i}(\calx_i)$, meaning that there is $c>0$\footnote{We assume that the constant $c>0$ is shared by all $L_i$'s for notational simplicity, instead of 
considering $c_1, \dots, c_I$.} and a (trace) inequality
\begin{equation}\label{eq:bounded_L}
\norm{W_2^{\tau-m_i}(\calx_i)}{L_i v} \leq c \norm{W_2^{\tau}(\calx)}{v}\;\;\fa v\in W_2^\tau(\calx).
\end{equation}
The assumption $\tau-m_i>d_i/2$ guarantees in particular that $L_i v\in W_2^{\tau-m_i}(\calx_i)\hookrightarrow C(\calx_i)$. 
Moreover, by assuming $m_i\geq 0$ we are including in the definition operators which raise the differentiability of $v$, such as some integral operators. 
Still, condition~\eqref{eq:bounded_L} needs to be satisfied.

Given data functions $f_i\in W_2^{\tau-m_i}(\calx_i)$, we consider the following problem:
\begin{equation}\label{eq:gen_pde}
\text{Find } u\in W_2^\tau(\calx) \text{ s.t. } L_i u(x) = f_i(x)\;\fa x \in \calx_i,\;\;i=1, \dots, I,
\end{equation}
which we assume to have a unique solution, even if uniqueness is not required for our analysis.
For example, the standard Poisson problem with Dirichlet boundary values on an open and bounded Lipschitz set $\Omega\subset\R^d$ can be represented by 
problem~\eqref{eq:gen_pde} with 
$I\coloneqq2$, $L_1 u\coloneqq-\Delta u$, $m_1\coloneqq 2$, $\calx_1\coloneqq\Omega$, $d_1\coloneqq d$, $L_2 u\coloneqq u$, $m_2\coloneqq 1/2$, 
$\calx_2\coloneqq\partial\Omega$, $d_2\coloneqq d-1$, $\calx\coloneqq\bar \Omega$, $\tau>2+d/2$. In this case the continuity assumption~\eqref{eq:bounded_L} is 
trivial for $i=1$, and it is a trace inequality for $i=2$. 
For this and more general PDEs, existence and uniqueness hold under quite standard conditions (see
e.g.~\cite{Evans2010}, or more specifically Assumption 1 in~\cite{Wenzel2025} for sufficient conditions in the case of elliptic, second order, Dirichlet boundary
value problems).

These operators induce a set of linear functionals $\Lambda\coloneqq \Lambda_{1}\cup \dots\cup \Lambda_{I}$ with
\begin{equation*}
\Lambda_{i}\coloneqq \{\delta_x\circ L_i: x\in \calx_i\},\;\;i=1,\dots,I,
\end{equation*}
and for a kernel $k$ with $\nsx\hookrightarrow W_2^\tau(\calx)$ these functionals are continuous on $\nsx$. Indeed, 
using $W_2^{\tau-m_i}(\calx_i)\hookrightarrow C(\calx_i)$, the inequality~\eqref{eq:bounded_L} , and $\nsx\hookrightarrow W_2^\tau(\calx)$, for $x\in \calx_i$ 
we have
\begin{align*}
\norm{\calh_k(\calx)'}{\delta_x\circ L_i}
&= \sup\limits_{0\neq v\in \nsx} \frac{|L_i v(x)|}{\norm{\nsx}{v}}
\leq \sup\limits_{0\neq v\in \nsx} \frac{\norm{C(\calx_i)}{L_i v}}{\norm{\nsx}{v}}
\leq C\cdot \sup\limits_{0\neq v\in \nsx} \frac{\norm{W_2^{\tau-m_i}(\calx_i)}{L_i v}}{\norm{\nsx}{v}}\\
&\leq C'\cdot \sup\limits_{0\neq v\in \nsx} \frac{\norm{W_2^{\tau}(\calx)}{v}}{\norm{\nsx}{v}}
\leq C'',
\end{align*}
where $C,C',C''>0$ are suitable constants.
These functionals have thus Riesz representers in $\nsx$, which are given by applying $\lambda\in\calh_k(\calx)'$ to the first kernel argument, 
i.e., $v_\lambda(x)\coloneqq\lambda(k(\cdot, x))\in\nsx$ for $x\in \calx$ (Theorem 16.7 in~\cite{Wendland2005}).
We can then define the bounded set $K\coloneqq \{v_\lambda: \lambda \in\Lambda\} = K_1\cup \dots\cup K_I$, where
\begin{equation}\label{eq:set_K}
K_i \coloneqq \{v_\lambda: \lambda \in\Lambda_{i}\} = \{L_i(k(\cdot, x)): x\in\calx_i\}\subset\nsx,\;\;i=1,\dots, I.
\end{equation}
To solve~\eqref{eq:gen_pde} by generalized interpolation we choose $n\coloneqq n_1+\dots+n_I$ points $X\coloneqq X_1\cup \dots\cup X_I\subset\calx$, where
$X_i\coloneqq\{x_{i,j}\}_{j=1}^{n_i}\subset\calx_i$ with $n_i\in\N$, and we select the corresponding functionals $\lambda_{i, j}\coloneqq \delta_{x_{i,j}} 
\circ L_i\in 
\Lambda_{i}$, $1\leq j\leq n_i$.
The approximate solution of~\eqref{eq:gen_pde} is then defined as
\begin{align}\label{eq:min_prob}
u_n&\coloneqq \argmin_{v\in \nsx}\norm{\nsx}{v}\\
&\text{ s.t. }\lambda_{i, j}(v) = f_i(x_j),\;\;1\leq j\leq n_i,\;\; i=1,\dots, I.\nonumber
\end{align}
This solution can be shown to take the generic form
\begin{equation}\label{eq:gen_interp}
u_n(x)\coloneqq \sum_{i=1}^I \sum_{j=1}^{n_i} \alpha_{i,j} v_{\lambda_{i,j}}(x),
\end{equation}
with coefficients determined by imposing the constraints in~\eqref{eq:min_prob} on $u_n$.
If the Riesz representers generating the space $V^n\coloneqq \Span\{v_{\lambda_{i,j}}: 1\leq j\leq n_i, i=1,\dots, I\}\subset K$ are additionally assumed to be 
linearly independent, then the representation~\eqref{eq:gen_interp} is unique, and the coefficients can be found by solving an invertible linear system. 
This linear independence is guaranteed for commonly used differential operators if the points in each $X_i$ are pairwise disjoint (see Theorem 
16.4 in~\cite{Wendland2005}), and we will see in Lemma~\ref{lemma:lin_ind} that this is always the case for certain greedy methods.
In any case, it can be proven that $u_n = \Pi_{V^n}(u)$, with $\Pi_{V^n}$ being the $\nsx$-orthogonal projector onto $V^n$.

Bounds on the error $u-u_n$ are usually derived by combining bounds on the residuals $L_i(u-u_n)$ with a stability result for the problem~\eqref{eq:gen_pde}. 
More specifically, we assume that there is $C_s>0$ such that for all $v, v'\in W_2^\tau(\calx)$ it holds
\begin{equation}\label{eq:continuity}
\norm{L_\infty(\calx)}{v-v'}\leq C_s\left( \norm{L_\infty(\calx_1)}{L_1(v-v')}+\dots + \norm{L_\infty(\calx_I)}{L_I(v-v')}\right).
\end{equation}
This assumption is satisfied by a maximum principle if~\eqref{eq:gen_pde} is an elliptic PDE (as used e.g. in~\cite{Wendland2005} and~\cite{Wenzel2025}), but 
more general
bounds of this type are commonly employed in linear and nonlinear collocation methods (see~\cite{Schaback2007}, equation (1.2), for a general framework, and
e.g.~\cite{Lee2025,Xu2025} for recent examples).
Observe moreover that~\eqref{eq:continuity} implies a.e.-uniqueness of the solutions of~\eqref{eq:gen_pde}, thus uniqueness since 
$W_2^\tau(\calx)\hookrightarrow C(\calx)$.

\begin{remark}
The framework of this section covers also the solution of parametric PDEs via symmetric collocation~\cite{Haasdonk2025a}, by letting $\hat x \coloneqq (x,\mu) 
\in 
\hat \calx \coloneqq \calx \times \calx_\mu$, where $\calx_\mu$ is a space of parameters and the differential operators only act on the position subvector $x$. 
For this, we need to assume 
slice-wise well-posedness results in the sense that~\eqref{eq:continuity} holds for any slice given by $\calx(\mu)\coloneqq \calx \times \{\mu\}$ (see the 
bound (10) in~\cite{Haasdonk2025a}).
\end{remark}

\subsection{Greedy generalized interpolation}
Greedy methods applied to generalized interpolation keep $u_n$ in the form~\eqref{eq:gen_interp} with coefficients satisfying the constraints in~\eqref{eq:min_prob}, but now restricting the
sum to an
incrementally selected set of functionals $\Lambda^n\coloneqq\{\lambda_j: 1\leq j\leq n\}\subset\Lambda$, or equivalently to collocation points 
$X^{n}\coloneqq\{x_{i_\ell j_\ell}: 1\leq \ell\leq n\}\subset\calx$, instead of a-priori prescribing the location and number of the points $X_1\subset\calx_1, 
\dots, 
X_I\subset\calx_I$.

Letting $\beta\geq 0$, $V^0\coloneqq \{0\}$ and $V^n \coloneqq \Span\{v_{\lambda_j}: \lambda_j\in \Lambda^n\}\subset K$, we are particularly interested in 
functionals selected by the \emph{PDE $\beta$-greedy} selection criteria, which are given in Definition 4.2
in~\cite{Wenzel2025} as
\begin{equation}\label{eq:selection}
\lambda_{n} \in \argmax_{\lambda \in \Lambda\setminus \Lambda^{n-1}} \eta_{n-1,\beta}(\lambda),\;\;
\eta_{n,\beta}(\lambda)\coloneqq
 \left|\lambda(u - \Pi_{V^n}(u))\right|^\beta\cdot P_{V^n}^{1-\beta}(\lambda),\;\;n\in\N,
\end{equation}
with the generalized power function (Section 16.1 in~\cite{Wendland2005}) defined by
\begin{equation}\label{eq:gen_pf}
P_{V^n}(\lambda)
\coloneqq \sup\limits_{\norm{\nsx}{v}\leq 1}|\lambda(v - \Pi_{V^n}(v))|
=\norm{\nsx}{v_\lambda - \Pi_{V^n}(v_\lambda)}.
\end{equation}
These criteria can be extended also to the limit $\beta\to\infty$ (obtaining a so called PDE-$f/P$-greedy algorithm), but for this case the analysis
of~\cite{Wenzel2025} is already exhaustive and thus we do not consider it here. 

Before proceeding, we show that any PDE $\beta$-greedy method always selects linearly independent functionals, as claimed above, provided it is stopped 
whenever $\eta_{n,\beta}(\lambda)=0$ for all $\lambda\in \Lambda$, in which case $\lambda_1, \dots, \lambda_n$ are a finite basis of $\Lambda$.
\begin{lemma}\label{lemma:lin_ind}
Let $n\in\N$ and $\beta\geq 0$.
If $\eta_{n,\beta}(\lambda)\neq0$ for some $\lambda\in \Lambda$, then $\lambda_{n+1}$ is linearly independent from $\lambda_1, \dots, \lambda_n$.
If instead $\eta_{n,\beta}(\lambda)=0$ for all $\lambda\in \Lambda$, then $\Lambda\subset\Span\{\lambda_1, \dots, \lambda_n\}$ (or equivalently $K\subset V^n$).
\end{lemma}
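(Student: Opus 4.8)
The plan is to move both claims to the Riesz representers $v_\lambda$ and read them off the geometry of $V^n$. Since $\lambda\mapsto v_\lambda$ is an isometric isomorphism $\nsx'\to\nsx$ mapping $\Lambda^n$ onto a generating set of $V^n$, linear independence of $\lambda_1,\dots,\lambda_n$ is equivalent to linear independence of $v_{\lambda_1},\dots,v_{\lambda_n}$, and $\Lambda\subset\Span\{\lambda_1,\dots,\lambda_n\}$ is equivalent to $K\subset V^n$. I would record two elementary facts: by \eqref{eq:gen_pf} one has $P_{V^n}(\lambda)=0$ if and only if $v_\lambda\in V^n$; and if $v_\lambda\in V^n$ then, since $\Pi_{V^n}$ is a self-adjoint idempotent with range $V^n$, $\lambda(u-\Pi_{V^n}(u))=\left\langle v_\lambda-\Pi_{V^n}(v_\lambda),\,u-\Pi_{V^n}(u)\right\rangle_{\nsx}=0$. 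The single observation carrying the proof is then: $\eta_{n,\beta}(\lambda)\neq 0$ forces $P_{V^n}(\lambda)\neq 0$, equivalently $v_\lambda\notin V^n$. Indeed, if $P_{V^n}(\lambda)=0$ then $v_\lambda\in V^n$, hence also $\lambda(u-\Pi_{V^n}(u))=0$, and checking the two factors in \eqref{eq:selection} case by case in $\beta$ yields $\eta_{n,\beta}(\lambda)=0$.

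For the first assertion, suppose $\eta_{n,\beta}(\lambda)\neq 0$ for some $\lambda\in\Lambda$. Then $v_\lambda\notin V^n$ by the observation, so $\lambda\notin\Lambda^n$ and $\lambda$ is admissible in the maximization that defines $\lambda_{n+1}$; hence $\eta_{n,\beta}(\lambda_{n+1})\geq\eta_{n,\beta}(\lambda)>0$, and applying the observation to $\lambda_{n+1}$ gives $v_{\lambda_{n+1}}\notin V^n=\Span\{v_{\lambda_1},\dots,v_{\lambda_n}\}$, i.e.\ $\lambda_{n+1}$ is linearly independent from $\lambda_1,\dots,\lambda_n$. Iterating this over the steps of the algorithm shows the whole selected family is linearly independent, which is precisely what makes the representation \eqref{eq:gen_interp} unique.

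For the second assertion, assume $\eta_{n,\beta}(\lambda)=0$ for all $\lambda\in\Lambda$. If $\beta=0$ this is exactly $P_{V^n}(\lambda)=0$ for all $\lambda$, so $v_\lambda\in V^n$ for every $\lambda$, i.e.\ $K\subset V^n$ and, dually, $\Lambda\subset\Span\{\lambda_1,\dots,\lambda_n\}$; together with the first assertion, $\lambda_1,\dots,\lambda_n$ is then a basis of $\Span\Lambda$. For $\beta>0$ I would proceed in two steps. First, from $\eta_{n,\beta}(\lambda)=0$ I deduce $\lambda(u-u_n)=0$ for every $\lambda$ — if $P_{V^n}(\lambda)>0$ the residual factor in \eqref{eq:selection} must vanish, and if $P_{V^n}(\lambda)=0$ the residual vanishes by the identity above — so that $L_i(u-u_n)\equiv 0$ on $\calx_i$ for all $i$ and the well-posedness bound \eqref{eq:continuity} forces $u=u_n=\Pi_{V^n}(u)$. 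Second, I would upgrade this to $K\subset V^n$; here \eqref{eq:continuity} also supplies $\overline{\Span K}=\nsx$ (any $w\perp\Span K$ has $L_i w\equiv 0$ for all $i$, hence $w=0$), so that $K\subset V^n$ is the same as $V^n=\nsx$. I expect this second step to be the main obstacle: for $\beta>0$ the identity $\eta_{n,\beta}\equiv 0$ only controls the residual $u-u_n$ and not the whole family $K$, so one has to rule out that the greedy iteration lands in a proper subspace $V^n$ that already contains $u$; I would address this by combining the density of $\Span K$ with the behaviour of the selection rule at the earlier iterates, and, should that not close the gap unconditionally, it would indicate that the robust content here is the $\beta=0$ case supplemented by the reduction $u=u_n$ coming from \eqref{eq:continuity}.
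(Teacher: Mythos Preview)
Your treatment of the first assertion is correct and coincides with the paper's: both rest on the implication $\eta_{n,\beta}(\lambda)\neq 0\Rightarrow v_\lambda\notin V^n$, obtained by noting that $v_\lambda\in V^n$ kills both factors in \eqref{eq:selection}. Your handling of the second assertion for $\beta=0$ is likewise fine.

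Where you diverge from the paper is in the second assertion for $\beta>0$, and your caution there is justified. The paper obtains the conclusion by asserting the \emph{biconditional} $\lambda(u-\Pi_{V^n}(u))=0\Leftrightarrow v_\lambda\in V^n$, arguing that the left-hand side places $v_\lambda$ in $((V^n)^\perp)^\perp=V^n$. You correctly record only the implication $v_\lambda\in V^n\Rightarrow\lambda(u-\Pi_{V^n}(u))=0$; the converse claimed in the paper does not hold, since orthogonality of $v_\lambda$ to the single vector $u-\Pi_{V^n}(u)\in(V^n)^\perp$ does not give orthogonality to all of $(V^n)^\perp$. The scenario you isolate---$u\in V^n$ but $K\not\subset V^n$---is then a genuine obstruction to the second clause for any $\beta>0$: one has $\lambda(u-u_n)=0$ for every $\lambda\in\Lambda$, hence $\eta_{n,\beta}\equiv 0$, yet $K\not\subset V^n$. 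Your density argument via \eqref{eq:continuity} only sharpens this, since it shows that in the infinite-dimensional Sobolev setting $K\subset V^n$ is impossible while the hypothesis $\eta_{n,\beta}\equiv 0$ can still be met whenever $u$ happens to lie in $V^n$. So the gap you flag cannot be closed from the stated hypotheses; what survives for $\beta>0$ is exactly the reduction $u=u_n$ you extract, which is the operationally relevant conclusion for terminating the greedy iteration.
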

\begin{proof}
Equation~\eqref{eq:gen_pf} implies that $P_{V^n}(\lambda)=0$ if and only if $v_\lambda\in V^n$.
On the other hand, $\lambda(u - \Pi_{V^n}(u))=\inner{\nsx}{v_\lambda, u - \Pi_{V^n}(u)}=0$ if and only if $v_\lambda\in ((V^n)^\perp)^\perp = V^n$, since $V^n$ 
is finite dimensional thus closed.

Now if $\eta_{n,\beta}(\lambda)=0$ then $\lambda(u - \Pi_{V^n}(u))=0$ for $\beta>1$, or possibly even $P_{V^n}(\lambda)=0$ if $\beta\in[0,1]$, and thus in both 
cases $v_\lambda \in V^n$ or equivalently $\lambda\in\Span\{\lambda_1, \dots, \lambda_n\}$.
Viceversa, for all $\beta\geq 0$ if $\eta_{n,\beta}(\lambda)>0$ then $\lambda(u - \Pi_{V^n}(u))\neq0$, thus $v_\lambda \notin V^n$, implying the linear independence of the corresponding functional.
\end{proof}

We refer to~\cite{Wenzel2025} for details of the practical implementation of this method and for a discussion of its computational efficiency, 
and to~\cite{Haasdonk2025a} for a more stable implementation for the case $\beta>1$. In particular, we remark that the iteration 
is always stopped at a finite $n$ based on some termination criteria.

\section{Entropy numbers and generalized interpolation}\label{eq:entropy_numbers}

We review some notions related to entropy numbers, for which we refer to~\cite{Carl1981} and to Appendix A.5.6 in~\cite{Steinwart2008}, and recall their
use in the error theory of greedy algorithms following~\cite{Li2023,Santin2024a}\footnote{The notation in our paper partially updates that
of~\cite{Santin2024a} to better reflect the standard notation used e.g. in~\cite{Carl1981}.}.

For a Hilbert space\footnote{These definitions can be given for a metric space $V$. We stick to this simpler case as it suffices for our purposes.} $(V, 
\norm{V}{})$, a bounded subset $M\subseteq V$, and $n\in\N$, the $n$-th
dyadic metric entropy number of $M$ is defined by
\begin{equation}\label{eq:entropy}
e_n(M)
\coloneqq e_n(M, \|\cdot\|_V)
\coloneqq\inf\left\{\varepsilon > 0: \exists\ x_1, \dots, x_{q}\in V, q\leq 2^{n-1}: M\subseteq \bigcup_{i=1}^q B_V(x_i, \varepsilon)\right\},
\end{equation}
where $B_V(x_i, \varepsilon)$ is the closed $V$-ball centered at $x_i\in V$ with radius $\varepsilon>0$.
We remark that these balls can be equivalently chosen to be open, and that the points $x_1, \dots, x_q$ can be restricted to be in $M$ up to
multiplying $e_n(M)$ by $2$ (equation A.36 in~\cite{Steinwart2008}).
In particular, the entropy numbers of $M$ coincide with those of its closure $\overline M$ in $V$, hence in the following we need not to care if $M$ is closed or not.
The number $e_n(M)$ can be regarded as a measure of the precompactness of the set $M$, and in particular $\lim_{n\to\infty} e_n(M)=0$ if and only if 
$M$ is precompact in $V$ (equation (1.1.5) in~\cite{Carl1981}).
Furthermore, entropy numbers are  additive and increasing: Given bounded subsets $M, \hat M\subseteq V$, property (DE2) in~\cite[Section 1.3]{Carl1981}, gives 
that
\begin{equation}\label{eq:en_additive}
e_{n}(M +\hat M)
\leq e_\ell(M) + e_{n-\ell+1}(\hat M)\fa \ell\in\N, \ell \leq n,
\end{equation}
and if additionally $M\subseteq \hat M$ then clearly
\begin{equation}\label{eq:en_increasing}
e_n(M)\leq e_n(\hat M),\;\; n\in\N.
\end{equation}
The absolute convex hull of a subset $M\subseteq V$ (see e.g.~\cite{Carl2014}) is the convex and centrally
symmetric set
\begin{equation*}
\conv(M)
\coloneqq\left\{\sum_{v\in M} c_v v: c_v\in\R, \sum_{v\in M}|c_v|\leq 1\right\},
\end{equation*}
which is precompact if $M$ is precompact.
The rates of decay of $e_n(\conv(M))$ are well studied~\cite{Gao2004}, and for suitable sets $M$ they are relevant for the theory of greedy 
algorithms~\cite{Li2023}. 
In particular, they can be estimated when $M$ is \emph{smoothly parametrized} by the following class of Lipschitz maps (see~\cite{Lorentz1996a}, and Definition 
2 and 3 in~\cite{Siegel2022b}).
\begin{definition}\label{def:lipschitz_class}
For an open set $\Omega\subset\R^d$, a $V$-valued map $\mathcal F:\Omega\to V$, and any $v\in V$, consider the 
function
\begin{equation*}
\mathcal F_v(x)\coloneqq\inner{V}{v, \mathcal F(x)},\; x\in\Omega.
\end{equation*}
Given $s\coloneqq m+\alpha$ with $m\in\N_0$, $\alpha\in(0,1]$, we say that $\mathcal F\in\Lip_\infty(s, \Omega, V)$ if the partial derivative $D^\zeta \mathcal 
F_v$ is 
well defined for all multi-indices $\zeta\in\N_0^d$, $|\zeta|=m$, and if there is $C>0$ such that
\begin{equation*}
\seminorm{\Lip(s, L_\infty(\Omega))}{\mathcal F_v}
\coloneqq \max\limits_{|\zeta|=m}\sup_{\stackrel{y,z\in\Omega}{y\neq z}} \frac{\left|D^\zeta \mathcal F_v(y) - D^\zeta
\mathcal F_v(z)\right|}{|y-z|^\alpha}
\leq C \norm{V}{v}\;\;\fa v\in V.
\end{equation*}
For a smooth $d$-dimensional manifold $\calx$ and $\mathcal F:\calx\to V$, we say that $\mathcal F\in\Lip_\infty(s, \calx, V)$ if $\mathcal F\circ \varphi \in 
\Lip_\infty(s, \Omega, V)$ for each coordinate chart $(\Omega, \varphi)$\footnote{Assuming that $\calx$ is smooth implies that $\varphi\in 
C^\infty(\Omega;\R^d)$. We refer to Chapter 1 in~\cite{Lee2012} for details on smooth manifolds.}.
\end{definition}
With this definition we have the following result (Theorem 4 in~\cite{Siegel2022b}).
\begin{theorem}\label{th:siegel_bound}
Let $\calx$ be a $d$-dimensional compact smooth manifold, possibly with boundary.
Let $s>0$ and let $M$ be smoothly parametrized in the sense that $M\subseteq\mathcal F(\calx)$ with $\mathcal F\in\Lip_\infty(s, \calx, 
V)$.
Then
\begin{equation*}
e_n(\conv(M)) \leq c\cdot n^{-\frac sd -\frac12}, \;\;n\in\N,
\end{equation*}
where $c>0$ depends only on $s,d,\mathcal F$.
\end{theorem}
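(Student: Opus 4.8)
The plan is to prove the bound in three steps: reduce the statement from a manifold to a Euclidean cube, expose a multiscale piecewise-polynomial structure of $\mathcal F(\calx)$ via Taylor expansion, and then estimate the convex-hull entropy by a Maurey-type empirical-averaging argument carefully combined with the decay across scales.

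\emph{Reduction to a cube.} Since $\calx$ is compact it admits a finite atlas $\{(\Omega_j,\varphi_j)\}_{j=1}^N$ of parametrizations $\varphi_j:\Omega_j\to\calx$, together with relatively compact $U_j\subseteq\Omega_j$ such that $\calx=\bigcup_{j=1}^N\varphi_j(U_j)$; then $M\subseteq\bigcup_{j=1}^N(\mathcal F\circ\varphi_j)(\overline{U_j})$ with each $\mathcal F\circ\varphi_j\in\Lip_\infty(s,\Omega_j,V)$ by Definition~\ref{def:lipschitz_class}. Using the elementary inclusion $\conv(\bigcup_j A_j)\subseteq\sum_{j=1}^N\conv(A_j)$, the subadditivity~\eqref{eq:en_additive} iterated $N-1$ times, and monotonicity~\eqref{eq:en_increasing}, one gets $e_n(\conv(M))\le N\max_j e_{\lfloor n/N\rfloor}\big(\conv((\mathcal F\circ\varphi_j)(\overline{U_j}))\big)$; since $N$ is fixed it suffices to prove the estimate when $\calx$ is a cube $Q=[0,1]^d$ and $\mathcal F\in\Lip_\infty(s,Q,V)$, up to constants.

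\emph{Multiscale Taylor decomposition.} Write $s=m+\alpha$ with $m=\lceil s\rceil-1\in\N$, $\alpha\in(0,1]$. For $\ell\ge0$ partition $Q$ into $2^{\ell d}$ dyadic subcubes $Q^\ell_j$ of side $2^{-\ell}$ with centers $c^\ell_j$, and let $T^\ell_j(x)\coloneqq\sum_{|\zeta|\le m}\tfrac{(x-c^\ell_j)^\zeta}{\zeta!}\,D^\zeta\mathcal F(c^\ell_j)$ be the degree-$m$ $V$-valued Taylor polynomial; the coefficients $D^\zeta\mathcal F(c^\ell_j)\in V$ are well defined because $v\mapsto D^\zeta\mathcal F_v(c^\ell_j)$ is a bounded functional by Definition~\ref{def:lipschitz_class}. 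The scalar Taylor theorem applied to $\mathcal F_v$, together with the Hölder bound on $D^\zeta\mathcal F_v$, gives $\sup_{x\in Q^\ell_j}\|\mathcal F(x)-T^\ell_j(x)\|_V\le c\,2^{-\ell s}$, and $T^\ell_j$ takes values in $W^\ell_j\coloneqq\Span\{D^\zeta\mathcal F(c^\ell_j):|\zeta|\le m\}$ of dimension at most $D_0\coloneqq\binom{m+d}{d}$. Telescoping across scales, $\mathcal F(x)=T^0(x)+\sum_{\ell\ge1}\big(T^\ell_{j(\ell,x)}(x)-T^{\ell-1}_{j(\ell-1,x)}(x)\big)$ converges in $V$, the $\ell$-th increment has $V$-norm $\le c\,2^{-\ell s}$ and lies in $W^\ell_{j(\ell,x)}+W^{\ell-1}_{j(\ell-1,x)}$; hence $\mathcal F(Q)\subseteq M_0+\sum_{\ell\ge1}M_\ell$, where $M_\ell$ is a union of at most $2^{\ell d}$ bounded sets, each contained in a subspace of dimension $\le 2D_0$ and of radius $\le c\,2^{-\ell s}$, and therefore $\conv(\mathcal F(Q))\subseteq\conv(M_0)+\sum_{\ell\ge1}\conv(M_\ell)$.

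\emph{Convex-hull entropy and the obstacle.} The heart of the proof is to estimate $e_n(\conv(M_\ell))$ sharply and to combine the scales. I would use the empirical (Maurey) method: any element of $\conv(M_\ell)$ is within $O(2^{-\ell s}N^{-1/2})$ of an $N$-term signed average $\tfrac1N\sum_{i=1}^N\pm w_i$ of elements $w_i\in M_\ell$, and the family of all such averages splits, over the combinatorial choice of which of the $2^{\ell d}$ patches are used (at most $(c(1+2^{\ell d}/N))^N$ options), into sets that are $N$-fold Minkowski averages of $\le 2D_0$-dimensional balls — hence of dimension $\le 2D_0N$ and, crucially, of radius $\lesssim 2^{-\ell s}N^{-1/2}$. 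Balancing the scale $\ell$, the sample count $N$, and the budget $n$ must make the smoothness gain ($2^{-\ell s}$, effectively $n^{-s/d}$) and the averaging gain ($N^{-1/2}$, effectively $n^{-1/2}$) compound \emph{multiplicatively}; then, allocating the budget roughly proportionally to $2^{\ell d}$ across scales and absorbing the fine scales into a single point (using that $\sum_{\ell\ge L}\conv(M_\ell)$ has radius $\lesssim 2^{-Ls}$), the per-scale contributions form a geometric series summing to $c\,n^{-s/d-1/2}$. I expect the main obstacle to be exactly this final balancing carried out sharply: a naive execution — replacing each low-dimensional patch by an $\varepsilon$-net, or estimating the scales in isolation — reintroduces a $\sqrt{\log n}$ factor or forfeits the extra $n^{-1/2}$, so the clean, logarithm-free rate requires the delicate bookkeeping behind~\cite[Theorem~4]{Siegel2022b} (equivalently, a recursive entropy estimate of the type in~\cite{Gao2004} that bypasses $\varepsilon$-nets altogether).
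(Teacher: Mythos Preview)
The paper does not prove this statement at all: Theorem~\ref{th:siegel_bound} is quoted verbatim as Theorem~4 in~\cite{Siegel2022b}, with no proof given. Your proposal is therefore not competing against any argument in the paper; it is a sketch of the proof of the cited result itself, and you in fact acknowledge this in your final paragraph by pointing to~\cite{Siegel2022b} and~\cite{Gao2004} for the decisive step.

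As a sketch of the Siegel argument your outline is broadly faithful: the reduction to a cube by a finite atlas, the dyadic Taylor decomposition producing low-dimensional pieces of geometrically decaying radius, and the Maurey-type averaging are indeed the ingredients. But your own diagnosis is accurate --- the part you label ``the obstacle'' is the entire substance of the theorem. Your write-up stops precisely where the difficulty begins: showing that the smoothness gain $n^{-s/d}$ and the averaging gain $n^{-1/2}$ compound multiplicatively without a residual $\sqrt{\log n}$ requires the sharp combinatorial counting and recursive entropy estimate that you defer to the references. As written, the proposal is a correct roadmap but not a proof; a referee would mark the third step as ``proof by citation,'' which is exactly what the paper does in one line.
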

\begin{remark}\label{rem:bounded_set}
If $\calx\subset\R^d$ is a generic bounded set, then $\calx\subset \hat\calx\coloneqq B_{\R^d}(0, 
D)$ 
for some $D>0$, and $\hat\calx$ is a $d$-dimensional, smooth and compact manifold with boundary. It follows that $M\subseteq\mathcal F(\calx)\subseteq 
\mathcal F(\hat\calx)$ 
provided $\mathcal F$ can be extended to $\hat\calx$, and the theorem can still be applied if $\mathcal F\in\Lip_\infty(s, \hat\calx, V)$, and if $k$ is well
defined and positive definite also on $\hat\calx$, which is the case for commonly used kernels. In this case we have
$\calh_k(\calx)\hookrightarrow\calh_k(\hat\calx)$ with $\norm{\calh_k(\calx)}{f_{|\calx}}=\norm{\calh_k(\hat\calx)}{f}$ for all $f\in\calh_k(\hat\calx)$ 
(Theorem 10.46 in~\cite{Wendland2005}).
\end{remark}

\subsection{Convex hulls induced by linear and bounded differential operators}
We first prove how to bound the entropy number of the absolute convex hull of the union of $I$ sets.
\begin{lemma}\label{lemma:entropy_two_sets}
For $I\in\N$ and $i\in\{1,\dots, I\}$, let $K_i\subset V$ be bounded and assume that there are  $c_i,\alpha_i>0$ such that
\begin{equation}\label{eq:entropy_one_set}
e_{n}(\conv(K_i))\leq c_i n^{-{\alpha_i}}\;\;\fa n\in\N.
\end{equation}
Then for $K=K_1\cup \dots \cup K_I$ we have
\begin{equation}\label{eq:entropy_two_sets}
e_n(\conv(K))
\leq\left(\sum_{i=1}^I c_i I^{\alpha_i}\right)\cdot n^{-\min\{\alpha_1, \dots, \alpha_I\}}\;\fa n\geq I.
\end{equation}
\end{lemma}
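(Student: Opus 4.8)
The plan is to reduce the estimate to the single-set hypotheses \eqref{eq:entropy_one_set}, by first establishing the set inclusion
\[
\conv(K) = \conv(K_1 \cup \dots \cup K_I) \subseteq \conv(K_1) + \dots + \conv(K_I),
\]
and then gluing the single-set entropy bounds together with the additivity property \eqref{eq:en_additive}.

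For the inclusion, I would use that $\conv(M)$ is, by definition, the smallest absolutely convex (convex and balanced) subset of $V$ containing $M$, so that in particular $0 \in \conv(M)$. A Minkowski sum of absolutely convex sets is again absolutely convex, hence $\conv(K_1) + \dots + \conv(K_I)$ is absolutely convex; it contains each $K_i$, because $K_i \subseteq \conv(K_i)$ and $0 \in \conv(K_j)$ for $j \neq i$, and therefore it contains $K = K_1 \cup \dots \cup K_I$ and with it $\conv(K)$. Each $K_i$ is bounded, so each $\conv(K_i)$ and their sum are bounded and all the entropy numbers below make sense; combining the inclusion with monotonicity \eqref{eq:en_increasing} gives, for all $n \in \N$,
\[
e_n(\conv(K)) \le e_n\bigl(\conv(K_1) + \dots + \conv(K_I)\bigr).
\]

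Next I would peel off the $I$ summands one at a time, applying \eqref{eq:en_additive} exactly $I-1$ times. A suitable choice of the split indices — which is the one step that requires some care — is to give the first summand the index $\ell_1 \coloneqq n - (I-1)(\lceil n/I \rceil - 1)$ and each remaining summand the index $\ell_j \coloneqq \lceil n/I \rceil$ for $j = 2, \dots, I$. A short induction, using $I\lceil n/I\rceil \le n + I - 1$, shows this choice is admissible for the successive applications of \eqref{eq:en_additive} in the stated range $n \ge I$ and satisfies $\ell_1 + \dots + \ell_I = n + I - 1$. This yields
\[
e_n\bigl(\conv(K_1) + \dots + \conv(K_I)\bigr) \le \sum_{i=1}^I e_{\ell_i}\bigl(\conv(K_i)\bigr).
\]

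Finally I would substitute \eqref{eq:entropy_one_set}. Since $\ell_i \ge \lceil n/I\rceil \ge n/I$ for every $i$, we get $e_{\ell_i}(\conv(K_i)) \le c_i \ell_i^{-\alpha_i} \le c_i I^{\alpha_i} n^{-\alpha_i}$, and since $n \ge 1$ and each $\alpha_i \ge \min\{\alpha_1,\dots,\alpha_I\}$ we have $n^{-\alpha_i} \le n^{-\min\{\alpha_1,\dots,\alpha_I\}}$. Summing over $i$ then gives
\[
e_n(\conv(K)) \le \sum_{i=1}^I c_i I^{\alpha_i} n^{-\alpha_i} \le \Bigl(\sum_{i=1}^I c_i I^{\alpha_i}\Bigr) n^{-\min\{\alpha_1,\dots,\alpha_I\}},
\]
which is \eqref{eq:entropy_two_sets}. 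The main obstacle I anticipate is purely the index bookkeeping in the iterated use of \eqref{eq:en_additive}; the inclusion and the concluding arithmetic are routine.
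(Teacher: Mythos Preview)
Your proposal is correct and follows essentially the same route as the paper: the inclusion $\conv(K)\subseteq\sum_i\conv(K_i)$, then monotonicity~\eqref{eq:en_increasing}, then iterated additivity~\eqref{eq:en_additive} with indices of size roughly $n/I$, and finally insertion of the single-set bounds~\eqref{eq:entropy_one_set}. Your justification of the inclusion (via the minimality characterisation of $\conv$) and your assignment of the split indices (placing the ``leftover'' index $n-(I-1)(\lceil n/I\rceil-1)$ at $\ell_1$ rather than at the last slot, as the paper does) are cosmetic variations; the substance and the resulting arithmetic are identical.
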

\begin{proof}
First it holds that $\conv(K)\subset \conv(K_1) + \dots + \conv(K_I)$, since
any sum $f\coloneqq \sum_{v\in K} c_v v$  with $\sum_{v\in K}|c_v|\leq 1$ can be split (non uniquely) into $I$ sums, each over $v\in K_i$ and with similarly 
bounded coefficients. Thus~\eqref{eq:en_increasing} implies that
\begin{equation}\label{eq:ooo}
e_n(\conv(K)) \leq e_n(\conv(K_1) +\dots + \conv(K_I)).
\end{equation}
We assume now $n\geq I$ and use repeatedly the additivity~\eqref{eq:en_additive}, which gives for $\ell_i\in\N$ that
\begin{align}\label{eq:tmp}
e_{n}(\conv(K_1) +\dots + \conv(K_2))
&\leq e_{\ell_1}(\conv(K_1)) + e_{n-\ell_1+1}(\conv(K_2) +\dots + \conv(K_I))\nonumber\\
&\leq e_{\ell_1}(\conv(K_1)) + e_{\ell_2}(\conv(K_2)) + e_{n-\ell_1-\ell_2+2}(\conv(K_3) +\dots + \conv(K_I))\nonumber\\
&\leq \sum_{i=1}^{I-1}e_{\ell_i}(\conv(K_i)) +e_{n-\ell_1-\dots-\ell_{I-1}+(I-1)}(\conv(K_I)),
\end{align}
where by ~\eqref{eq:en_additive} we need to require $\ell_1\leq n$, $\ell_2\leq n-\ell_1+1$, $\dots$, $\ell_{I-1}\leq n - \ell_1-\dots -\ell_{I-2}+{I-2}$, 
i.e., $\ell_1+\dots + \ell_i \leq n + (i-1)$, $i=1, \dots, I-1$.

Choosing $\ell_i\coloneqq\lceil n/I\rceil$ for all $i=1, \dots, I-1$ satisfies these constraints, since
\begin{equation*}
\ell_1+\dots + \ell_i
= i \left\lceil \frac{n}{I}\right\rceil
\leq i \left(\frac nI + 1\right)
\leq n + (i-1) \Leftrightarrow i\leq \frac{n-1}{n}\cdot I,
\end{equation*}
and the last inequality is true for all $i=1, \dots, I-1$ if $n\geq I$. Moreover, this choice gives $\ell_i\geq n/I$ and
\begin{equation*}
n-\ell_1-\dots-\ell_{I-1}+(I-1)
= n - (I-1)\left( \left\lceil \frac n I\right\rceil - 1\right)
\geq n - (I-1) \frac n I
= \frac n I.
\end{equation*}
Since $e_n$ is decreasing in $n$, we can then insert these $\ell_i$'s into~\eqref{eq:tmp} and use~\eqref{eq:entropy_one_set} and~\eqref{eq:ooo} to get
\begin{equation*}
e_n(\conv(K))
\leq \sum_{i=1}^I c_i (n/I)^{-\alpha_i}
\leq \left(\sum_{i=1}^I c_i I^{\alpha_i}\right)\cdot n^{-\min\{\alpha_1, \dots, \alpha_I\}},
\end{equation*}
as claimed.
\end{proof}

Observe that the result is optimized for the asymptotic behavior $n\to\infty$, while a finer analysis could reveal different regimes for different values of 
$n$, depending on the values of the coefficients $c_i$ and $\alpha_i$, as it is usual in the study of entropy numbers. Moreover, replacing $\ell_i=\lceil 
n/I\rceil$ with an uneven distribution of the $\ell_i$'s may lead to a better constant in~\eqref{eq:entropy_two_sets}.

We now specialize Lemma~\ref{lemma:entropy_two_sets} to the case of interest. To prove the result, we first recall that if $\calx_i\subset\R^d$ is a smooth 
compact manifold of dimension $d_i\in\N$, and  $s_i\coloneqq \tau - d_i/2$, then there is a constant $C_e>0$ such that
\begin{equation}\label{eq:holder_sobolev_embedding}
\seminorm{\Lip(s_i, L_\infty(\calx_i))}{v}
\leq\norm{C^{s_i}(\calx_i)}{v}
\leq C_e \norm{W_2^\tau(\calx_i)}{v},
\end{equation}
where the first inequality comes from the definition of the norm of the H\"older space $C^{s_i}(\calx_i)$, and the second is a manifold version of the usual 
Sobolev 
embedding theorem (see e.g.~Proposition 3.3\footnote{More precisely, for $v\in W_2^\tau(\calx_i)$ equation (3.5) implies that $v\in C^{s_0}(\calx_i)$ with 
$s_0\coloneqq \lfloor \tau-d_i/2 \rfloor$. Then $D^\zeta v\in  W_2^{\tau-s_0}(\calx_i)$ for all $\zeta\in \N_0^{d_i}$, $|\zeta|\leq s_0$, and equation 
(3.6) can be applied to conclude.} in Chapter 4 of~\cite{Taylor1996}).
We have then the following bound on the entropy numbers.
\begin{lemma}\label{lemma:entropy_LB}
With the notation of Section~\ref{sec:sym_coll}, for $i=1, \dots, I$ let in particular $L_i$ satisfy~\eqref{eq:bounded_L} and let $K_i$ be defined as 
in~\eqref{eq:set_K}.
Then there is $c_i>0$ independent of $n$  such that
\begin{equation*}
e_n(\conv(K_i))\leq c_i \cdot n^{-\frac{\tau-m_i}{d_i}}\;\fa n\in\N,\;\;i=1,\dots, I.
\end{equation*}
Moreover for $K= K_1\cup\dots\cup K_I$ it holds that
\begin{equation}\label{eq:entropy_K}
e_n(\conv(K))\leq \bar C\cdot  n^{-(\tau-\bar m)/\bar d}\;\;\fa n\geq I,
\end{equation}
where $(\bar m, \bar d)\in \{(m_i, d_i), 1\leq i\leq I\}$ with $(\tau-\bar m)/{\bar d}= \min_{1\leq i\leq 
I}\left((\tau-m_i)/{d_i}\right)$, and $\bar C\coloneqq \sum_{i=1}^I c_i 
I^{\frac{\tau-m_i}{d_i}}$.\end{lemma}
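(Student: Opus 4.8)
The plan is to prove the per-block bound first and then invoke Lemma~\ref{lemma:entropy_two_sets} to assemble the union estimate. For a fixed $i$, the key observation is that the set $K_i=\{L_i(k(\cdot,x)):x\in\calx_i\}$ is the image of the map $\mathcal F_i:\calx_i\to\nsx$, $\mathcal F_i(x)\coloneqq L_i(k(\cdot,x))=v_{\delta_x\circ L_i}$. So I would first show that $\mathcal F_i\in\Lip_\infty(s_i,\calx_i,\nsx)$ with $s_i=\tau-m_i-d_i/2$, after which Theorem~\ref{th:siegel_bound} (applied on the $d_i$-dimensional compact smooth manifold $\calx_i$, or via Remark~\ref{rem:bounded_set} if $\calx_i$ is just a bounded set) yields directly
\[
e_n(\conv(K_i))\le c_i\, n^{-\frac{s_i}{d_i}-\frac12}=c_i\, n^{-\frac{\tau-m_i-d_i/2}{d_i}-\frac12}=c_i\, n^{-\frac{\tau-m_i}{d_i}},
\]
which is exactly the claimed exponent. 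The arithmetic $\frac{s_i}{d_i}+\frac12=\frac{\tau-m_i}{d_i}$ is the reason the choice $s_i=\tau-m_i-d_i/2$ is forced.

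The substantive step is verifying $\mathcal F_i\in\Lip_\infty(s_i,\calx_i,\nsx)$, i.e.\ bounding the H\"older seminorm of the scalar functions $(\mathcal F_i)_v(x)=\inner{\nsx}{v,\mathcal F_i(x)}$ uniformly over $\|v\|_{\nsx}\le1$. Here I would use the reproducing property: for $v\in\nsx$,
\[
(\mathcal F_i)_v(x)=\inner{\nsx}{v,L_i k(\cdot,x)}=(L_i v)(x),
\]
where $L_i$ acts in the argument that becomes $x$ (this is the standard identity $\lambda(v)=\inner{\nsx}{v,v_\lambda}$ from Theorem 16.7 in~\cite{Wendland2005}, with $\lambda=\delta_x\circ L_i$). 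Thus $(\mathcal F_i)_v=L_i v$ as a function on $\calx_i$, and I must estimate $\seminorm{\Lip(s_i,L_\infty(\calx_i))}{L_i v}$. Chaining the embedding~\eqref{eq:holder_sobolev_embedding} applied on $\calx_i$ with smoothness parameter matching $\tau-m_i$ (so that $\seminorm{\Lip(s_i,L_\infty(\calx_i))}{L_i v}\le C_e\|L_i v\|_{W_2^{\tau-m_i}(\calx_i)}$), then the boundedness~\eqref{eq:bounded_L} of $L_i$ (giving $\|L_i v\|_{W_2^{\tau-m_i}(\calx_i)}\le c\,\|v\|_{W_2^\tau(\calx)}$), then the norm equivalence / embedding $\nsx\hookrightarrow W_2^\tau(\calx)$, produces
\[
\seminorm{\Lip(s_i,L_\infty(\calx_i))}{(\mathcal F_i)_v}\le C\,\|v\|_{\nsx}\quad\fa v\in\nsx,
\]
which is precisely the defining condition of $\Lip_\infty(s_i,\calx_i,\nsx)$. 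One should note $s_i=\tau-m_i-d_i/2>0$ by the hypothesis $\tau>m_i+d_i/2$, so the theorem applies. The main obstacle I anticipate is purely bookkeeping: matching the H\"older exponent $s_i=(\tau-m_i)-d_i/2$ in~\eqref{eq:holder_sobolev_embedding} (which as stated there refers to a Sobolev smoothness $\tau$ on a $d_i$-manifold, so one applies it with $\tau$ replaced by $\tau-m_i$), and, when $\tau-m_i\notin\N$ and $s_i$ is non-integer, checking that Definition~\ref{def:lipschitz_class} with $m=\lfloor s_i\rfloor$ is met rather than demanding higher-order derivatives — but this is exactly what~\eqref{eq:holder_sobolev_embedding} delivers.

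Finally, for $K=K_1\cup\dots\cup K_I$ I would apply Lemma~\ref{lemma:entropy_two_sets} with $\alpha_i=(\tau-m_i)/d_i$ and the constants $c_i$ just obtained. That lemma gives, for $n\ge I$,
\[
e_n(\conv(K))\le\Bigl(\sum_{i=1}^I c_i I^{(\tau-m_i)/d_i}\Bigr)\cdot n^{-\min_i\{(\tau-m_i)/d_i\}},
\]
and identifying $(\bar m,\bar d)$ as the index achieving the minimum and setting $\bar C=\sum_{i=1}^I c_i I^{(\tau-m_i)/d_i}$ gives~\eqref{eq:entropy_K} verbatim. No further estimates are needed here since Lemma~\ref{lemma:entropy_two_sets} is already stated in exactly the form required.
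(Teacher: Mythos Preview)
Your proposal is correct and follows essentially the same route as the paper: identify $(\mathcal F_i)_v=L_iv$ via the Riesz-representer identity, chain the H\"older--Sobolev embedding~\eqref{eq:holder_sobolev_embedding} (with smoothness $\tau-m_i$) together with~\eqref{eq:bounded_L} and $\nsx\hookrightarrow W_2^\tau(\calx)$ to get $\mathcal F_i\in\Lip_\infty(s_i,\calx_i,\nsx)$, apply Theorem~\ref{th:siegel_bound}, and finish with Lemma~\ref{lemma:entropy_two_sets}. The only detail the paper spells out that you leave implicit is the chart-level verification required by Definition~\ref{def:lipschitz_class} for manifolds (i.e.\ passing to $\mathcal F_i\circ\varphi$ via the chain rule, which introduces a harmless constant $c_\varphi$), but this is a routine step and does not affect the argument.
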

\begin{proof}
We start with a single functional $L_i$ and follow the approach of Section 3.4 in \cite{Santin2024a}. We assume that $\calx_i$ is a smooth manifold, possibly 
after the extension described in Remark~\ref{rem:bounded_set}, and use~\eqref{eq:set_K} to write $K_i \subseteq \mathcal F_i(\calx_i)$ with $\mathcal 
F_i:\calx_i\to\nsx$, $\mathcal F_i(x)\coloneqq L_i(k(\cdot, x))$.

Let now $(\Omega, \varphi)$ be a coordinate chart and $\mathcal F_i'\coloneqq \mathcal F_i\circ \varphi$.
For $\omega\in \Omega$, $x\coloneqq \varphi(\omega)\in \calx_i$, and $v\in\nsx$ we get
\begin{equation*}
\mathcal F_v'(\omega)
\coloneqq \inner{\nsx}{v, \mathcal F'_i(\omega)}
=\inner{\nsx}{v, L_i(k(\cdot, \varphi(\omega)))}
= L_i v(\varphi(\omega)),
\end{equation*}
and using the chain rule we get for $s_i\coloneqq \tau - m_i - d_i/2$ that
\begin{equation*}
\seminorm{\Lip(s_i, L_\infty(\Omega))}{\mathcal F_v'}
\leq c_\varphi\seminorm{\Lip(s_i, L_\infty(\calx_i))}{L_i v},
\end{equation*}
where $c_\varphi>0$ depends on the $C^{s_i}(\Omega)$-norms of the local coordinate functions $\varphi_1, \dots, \varphi_{d_i}$. 
Using~\eqref{eq:holder_sobolev_embedding} and~\eqref{eq:bounded_L} we further get
\begin{equation*}
\seminorm{\Lip(s_i, L_\infty(\calx_i))}{L_i v}
\leq\norm{C^{s_i}(\calx_i)}{L_i v}
\leq C_e \norm{W_2^{\tau-m_i}(\calx_i)}{L_i v}
\leq C_e c \norm{W_2^{\tau}(\calx)}{v}
\leq C'\norm{\nsx}{v},
\end{equation*}
using the embedding $\nsx\hookrightarrow W_2^\tau(\calx)$ in the last step.
It follows that
\begin{equation*}
\seminorm{\Lip(s_i, L_\infty(\Omega))}{\mathcal F_v'}
\leq C' \norm{\nsx}{v}\;\;\fa v\in\nsx,
\end{equation*}
which means that $\mathcal F_i'=\mathcal F_i\circ \varphi\in \Lip_{\infty}(s_i,\Omega, \nsx)$ for each $(\Omega, \varphi)$, implying $\mathcal F_i\in 
\Lip_{\infty}(s_i,\calx_i, \nsx)$ by Definition~\ref{def:lipschitz_class}.
Recalling that each $\calx_i$ is a smooth, $d_i$-dimensional manifold, Theorem~\ref{th:siegel_bound} applied to $K_i \subseteq \mathcal F_i(\calx_i)$ 
gives that there is $c_i>0$ depending only on $s_i, d_i, \mathcal F_i$ such that
\begin{equation*}
e_n(\conv(K_i))
\leq c_i\cdot n^{-\frac{s_i}{d_i} -\frac12}
= c_i\cdot n^{-\frac{\tau-m_i-d_i/2}{d_i} -\frac12}
= c_i\cdot n^{-\frac{\tau-m_i}{d_i}}.
\end{equation*}
The bound~\eqref{eq:entropy_K} then follows by applying Lemma~\ref{lemma:entropy_two_sets}.
\end{proof}

\section{Convergence of PDE-greedy algorithms}\label{sec:convergence}
We now link the product of incremental power function evaluations to the decay of the entropy numbers.
The argument is based on Lemma 2.1 in~\cite{Li2023}, which estimates the volume of a simplex in $K$ by a certain function of $e_n(\conv(K))$
\begin{lemma}\label{lemma:pf_prod}
Under the assumption of Lemma~\ref{lemma:entropy_LB}, let $\Lambda^j\coloneqq\{\lambda_1, \dots, \lambda_j\}\subset\Lambda$ be any nested set of functionals.
Then
\begin{equation}\label{eq:pf_prod_bound}
p_n\coloneqq\left(\prod_{j=n+1}^{2n}P_{\Lambda_j}(\lambda_{j+1})\right)^{1/n}
\leq \sqrt{5} n^{\frac12} e_n(\conv(K))
\leq \sqrt{5} \cdot \bar C\cdot  n^{-\frac{\tau-\bar m}{\bar d} + \frac12}\;\;\fa n\geq I,
\end{equation}
where $\bar C, \bar m, \bar d>0$ are as in Lemma~\ref{lemma:entropy_LB}.
\end{lemma}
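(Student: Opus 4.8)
The plan is to dispose of the trivial second inequality at once and to reduce the first one to a volumetric bound on simplices contained in $\conv(K)$. The estimate $\sqrt 5\, n^{1/2} e_n(\conv(K)) \le \sqrt 5\, \bar C\, n^{-(\tau-\bar m)/\bar d + 1/2}$ for $n\ge I$ is just~\eqref{eq:entropy_K} of Lemma~\ref{lemma:entropy_LB} multiplied by $n^{1/2}$, so all the content is in showing $p_n \le \sqrt 5\, n^{1/2} e_n(\conv(K))$. First I would read $p_n$ geometrically: with $V^j \coloneqq \Span\{v_{\lambda_1},\dots,v_{\lambda_j}\}$, identity~\eqref{eq:gen_pf} gives that each factor equals $\norm{\nsx}{v_{\lambda_{j+1}} - \Pi_{V^j}(v_{\lambda_{j+1}})} = \operatorname{dist}_{\nsx}(v_{\lambda_{j+1}}, V^j)$, so that $p_n^n$ is a product of $n$ distances of points of $K$ to nested subspaces.

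The second step is a telescoping. Put $w_k \coloneqq v_{\lambda_{n+1+k}} \in K$ for $k=1,\dots,n$; then $V^{n+k}\supseteq\Span\{w_1,\dots,w_{k-1}\}$, and since the distance to a subspace only decreases when the subspace is enlarged,
\[
p_n^n \;\le\; \prod_{k=1}^{n}\operatorname{dist}_{\nsx}\bigl(w_k,\Span\{w_1,\dots,w_{k-1}\}\bigr) \;=\; \sqrt{\det\bigl(\langle w_i, w_j\rangle_{\nsx}\bigr)_{i,j=1}^{n}},
\]
the last equality being the Gram--Schmidt (QR) identity. This Gram determinant is the $n$-dimensional volume of the parallelepiped spanned by $w_1,\dots,w_n$, that is, $n!$ times the volume of the $n$-simplex with vertices $0,w_1,\dots,w_n$, which is contained in $\conv(K)$ because a point $\sum_k t_k w_k$ with $t_k\ge0$, $\sum_k t_k\le1$ has coefficients of absolute sum at most $1$. (If the $w_k$ are linearly dependent both sides vanish and there is nothing to prove; by Lemma~\ref{lemma:lin_ind} this never happens along a PDE $\beta$-greedy run, although the argument does not rely on it.)

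The third step is to invoke Lemma 2.1 in~\cite{Li2023}, which estimates the volume of an $n$-simplex contained in $\conv(K)$ by the matching power of $n^{1/2}e_n(\conv(K))$: in its sharp form, after restoring the factor $n!$ from the parallelepiped, it yields $\prod_{k=1}^{n}\operatorname{dist}_{\nsx}(w_k,\Span\{w_1,\dots,w_{k-1}\}) \le \bigl(\sqrt 5\,n^{1/2}e_n(\conv(K))\bigr)^{n}$. Combining this with the second step and taking $n$-th roots gives $p_n \le \sqrt 5\, n^{1/2}e_n(\conv(K))$, whence~\eqref{eq:entropy_K} produces the full chain~\eqref{eq:pf_prod_bound} for $n\ge I$.

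The routine parts are the power-function/distance identity and the telescoping; the delicate ingredient is the simplex-volume estimate of~\cite{Li2023}. In essence, its proof covers $\conv(K)$, projected orthogonally onto the $(\le n)$-dimensional span of the $w_k$, by at most $2^{n-1}$ balls of radius arbitrarily close to $e_n(\conv(K))$ --- the orthogonal projection being norm-nonincreasing, so that the entropy numbers of the projection are controlled by those of $\conv(K)$ --- and compares with the volume of a Euclidean ball; the actual work lies in verifying that, once the $n!$ is accounted for, the emerging constant does not exceed $\sqrt 5$. The hypothesis $n\ge I$ is used nowhere but in the final appeal to~\eqref{eq:entropy_K}.
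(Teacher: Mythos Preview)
Your argument is correct and follows the same route as the paper: interpret $P_{\Lambda_j}(\lambda_{j+1})$ as a distance via~\eqref{eq:gen_pf}, bound the product by a Gram determinant/simplex volume inside $\conv(K)$, apply the Li--Siegel entropy bound, and finish with Lemma~\ref{lemma:entropy_LB}. The only cosmetic difference is that the paper black-boxes the telescoping plus volume estimate into a single citation (Lemma~2.1 in~\cite{Santin2024a}, which packages exactly the step you wrote out), whereas you unroll that lemma and cite~\cite{Li2023} for the volumetric core; this is the same proof at a finer granularity.
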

\begin{proof}
Using~\eqref{eq:gen_pf}, for $i=1, \dots, 2n$ we have that $P_{\Lambda_i}(\lambda_{i+1})$ is the distance between $v_{\lambda_{i+1}}$ and the subspace 
$V^i=\Span\{v_{\lambda_j}:1\leq j\leq i\}$, where $v_{\lambda_{j}}\in K$ for $j=1, \dots, i+1$.
We can thus apply Lemma 2.1 in~\cite{Santin2024a} with $m=n$, to get that
\begin{equation*}
p_n \leq \sqrt{5} n^{1/2} e_n(\conv(K)),
\end{equation*}
and inserting the bound of Lemma~\ref{lemma:entropy_LB} concludes the proof.
\end{proof}

We finally obtain our main result. The argument follows the exact same proof of Theorem 5.1 in~\cite{Wenzel2025}, where Corollary 4.2 
in~\cite{Wenzel2025} is replaced by the sharper bound of Lemma~\ref{lemma:pf_prod}. For a related result in the case of Hermite-Birkhoff interpolation, we also refer to Section 2.1 in~\cite{Herkert2026}.
\begin{theorem}\label{th:theorem}
Under the assumption of Lemma~\ref{lemma:entropy_LB}, for $j=1, \dots, 2n$ let $u_j$ be the generalized interpolant based on the functionals selected by a 
$\beta$-greedy algorithm with $\beta\geq 0$.
Then
\begin{equation}\label{eq:error_bound}
\min_{n+1\leq j \leq 2n} \norm{L_\infty(\calx)}{u - u_j}
\leq C' \left(n^{-\frac{\tau - \bar m}{\bar d}+\frac{1-\beta}{2}}\right)^{\frac{1}{\max(\beta,1)}}\cdot \norm{\nsx}{u - u_{n+1}} \;\;\fa 
n\geq I,
\end{equation}
where $C'\coloneqq C_s\cdot I\cdot \left(\sqrt{5} \cdot \bar C\right)^{1/(\max(1,\beta))} $, and $\bar C, \bar m, \bar d>0$ are as in 
Lemma~\ref{lemma:entropy_LB}.
\end{theorem}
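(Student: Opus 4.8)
The plan is to reproduce the proof of Theorem 5.1 in~\cite{Wenzel2025}, replacing the cruder power-function bound used there by the sharper estimate of Lemma~\ref{lemma:pf_prod}. Throughout, write $a_j\coloneqq\norm{\nsx}{u-u_j}$; this is non-increasing in $j$ since $u_j=\Pi_{V^j}(u)$ and the spaces $V^j$ are nested. I also set $B\coloneqq\sup_{\lambda\in\Lambda}\norm{\nsx}{v_\lambda}$, which is finite by the continuity estimate for the functionals in $\Lambda$ recalled in Section~\ref{sec:sym_coll}.

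First I would record a one-step identity for the native-space error. Adding the representer $v_{\lambda_{j+1}}$ to $V^j$ enlarges the orthogonal projection by the rank-one contribution in the direction $v_{\lambda_{j+1}}-\Pi_{V^j}(v_{\lambda_{j+1}})$, which has $\nsx$-norm $P_{V^j}(\lambda_{j+1})$; combined with $\lambda_{j+1}(u-u_j)=\inner{\nsx}{u-u_j, v_{\lambda_{j+1}}}$ and~\eqref{eq:gen_pf} this yields $a_j^2-a_{j+1}^2=|\lambda_{j+1}(u-u_j)|^2/P_{V^j}(\lambda_{j+1})^2$, hence $|\lambda_{j+1}(u-u_j)|=(a_j^2-a_{j+1}^2)^{1/2}P_{V^j}(\lambda_{j+1})$ and therefore, by the definition in~\eqref{eq:selection}, $\eta_{j,\beta}(\lambda_{j+1})=(a_j^2-a_{j+1}^2)^{\beta/2}\,P_{V^j}(\lambda_{j+1})$. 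This presumes $P_{V^j}(\lambda_{j+1})\neq0$, which holds as long as the algorithm has not terminated (Lemma~\ref{lemma:lin_ind}); if it terminates at some step $N\le2n$, then $K\subseteq V^N$, so $u-u_N$ is $\nsx$-orthogonal to every $v_\lambda$, hence $L_i(u-u_N)\equiv0$ on each $\calx_i$, and then~\eqref{eq:continuity} gives $u=u_N=u_j$ for all $j\ge N$, so that the minimum in~\eqref{eq:error_bound} vanishes and the bound is trivial. From now on assume that at least $2n+1$ functionals are produced.

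Next I would pass to the $L_\infty$ error and use the greedy rule to remove the power function of the non-selected functional. Since $\norm{L_\infty(\calx_i)}{L_i(u-u_j)}=\sup_{\lambda\in\Lambda_i}|\lambda(u-u_j)|$, the stability estimate~\eqref{eq:continuity} gives $\norm{L_\infty(\calx)}{u-u_j}\le C_s\,I\sup_{\lambda\in\Lambda}|\lambda(u-u_j)|$. For every $\lambda\in\Lambda$ one has the elementary bound $|\lambda(u-u_j)|=|\inner{\nsx}{u-u_j, v_\lambda-\Pi_{V^j}(v_\lambda)}|\le a_j\,P_{V^j}(\lambda)$, and for $\lambda\notin\Lambda^j$ the greedy rule gives $\eta_{j,\beta}(\lambda)\le\eta_{j,\beta}(\lambda_{j+1})$ (while for $\lambda\in\Lambda^j$ the residual vanishes). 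Combining these two to eliminate $P_{V^j}(\lambda)$: for $\beta\le1$, where $1-\beta\ge0$, one gets $\sup_{\lambda\in\Lambda}|\lambda(u-u_j)|\le a_j^{1-\beta}\eta_{j,\beta}(\lambda_{j+1})=a_j^{1-\beta}(a_j^2-a_{j+1}^2)^{\beta/2}P_{V^j}(\lambda_{j+1})$; for $\beta>1$, where the power function enters $\eta_{j,\beta}$ with a negative exponent, one additionally bounds $P_{V^j}(\lambda)\le\norm{\nsx}{v_\lambda}\le B$ to obtain $\sup_{\lambda\in\Lambda}|\lambda(u-u_j)|\le B^{1-1/\beta}(a_j^2-a_{j+1}^2)^{1/2}P_{V^j}(\lambda_{j+1})^{1/\beta}$, and the two expressions agree at $\beta=1$.

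Finally I would take the geometric mean over $j=n+1,\dots,2n$. Bounding $\min_{n+1\le j\le 2n}\norm{L_\infty(\calx)}{u-u_j}$ from above by the geometric mean over these $n$ indices of the upper bounds $C_s\,I\sup_{\lambda\in\Lambda}|\lambda(u-u_j)|$, then substituting the per-$j$ bounds above and using $a_j\le a_{n+1}$, the AM--GM inequality $\prod_{j=n+1}^{2n}(a_j^2-a_{j+1}^2)\le\big((a_{n+1}^2-a_{2n+1}^2)/n\big)^n\le(a_{n+1}^2/n)^n$, and the identity $\prod_{j=n+1}^{2n}P_{V^j}(\lambda_{j+1})=p_n^{\,n}$ together with Lemma~\ref{lemma:pf_prod}, the powers of $a_{n+1}$ collapse to a single factor $a_{n+1}=\norm{\nsx}{u-u_{n+1}}$ and the powers of $n$ collect to $n^{-(\tau-\bar m)/\bar d+(1-\beta)/2}$ when $\beta\le1$ and to its $1/\beta$-th power when $\beta>1$, that is, to $\big(n^{-(\tau-\bar m)/\bar d+(1-\beta)/2}\big)^{1/\max(\beta,1)}$, with the constant $C'$ as stated (absorbing the factor $B^{1-1/\beta}$ in the case $\beta>1$, as in~\cite{Wenzel2025}). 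The main obstacle is the exponent bookkeeping in this last step for general $\beta$: one must verify that the exponent of $a_{n+1}$ comes out exactly to $1$, so that the right-hand side is genuinely $\norm{\nsx}{u-u_{n+1}}$ and not a higher power of it, and that in the case $\beta>1$ the negative power-function exponent is controlled purely through the boundedness $B$ of the Riesz representers without deteriorating the rate in $n$.
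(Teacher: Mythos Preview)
Your proposal is correct and follows essentially the same route as the paper. The only difference is presentational: the paper compresses your middle two steps into a single citation, taking the inequality
\[
\min_{n+1\le j\le 2n}\sup_{\lambda\in\Lambda}|\lambda(u-u_j)|
\le\Bigl(\prod_{j=n+1}^{2n}\sup_{\lambda\in\Lambda}|\lambda(u-u_j)|\Bigr)^{1/n}
\le n^{-\min(\beta,1)/2}\,\norm{\nsx}{u-u_{n+1}}\cdot p_n^{1/\max(1,\beta)}
\]
directly from the proof of Theorem~5.1 in~\cite{Wenzel2025}, whereas you reconstruct it via the one-step Pythagorean identity, the greedy inequality $\eta_{j,\beta}(\lambda)\le\eta_{j,\beta}(\lambda_{j+1})$, and AM--GM on the telescoping differences $a_j^2-a_{j+1}^2$. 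Your exponent bookkeeping is fine: for $\beta\le1$ the factors $a_{n+1}^{1-\beta}\cdot a_{n+1}^{\beta}$ combine to $a_{n+1}$, and for $\beta>1$ only $(a_j^2-a_{j+1}^2)^{1/2}$ contributes, again giving a single power of $a_{n+1}$. Your observation about the extra factor $B^{1-1/\beta}$ for $\beta>1$ is also accurate; it is implicitly absorbed in the constant quoted from~\cite{Wenzel2025}, and the stated $C'$ in the theorem should strictly speaking carry it.
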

\begin{proof}
Let $E_j\coloneqq u -u_j$.
The stability bound~\eqref{eq:continuity} and the definition of $\Lambda$ give
\begin{equation*}
\min_{n+1\leq j \leq 2n} \norm{L_\infty(\calx)}{E_j}
\leq C_s\cdot\min_{n+1\leq j \leq 2n} \sum_{i=1}^I \norm{L_\infty(\Omega_i)}{L_i(E_j)}
\leq C_s \cdot\min_{n+1\leq j \leq 2n} \sum_{i=1}^I \sup_{\lambda \in \Lambda} |\lambda(E_j)|.
\end{equation*}
Then we take from the proof of Theorem 5.1 in~\cite{Wenzel2025} the estimate
\begin{equation}\label{eq:tmp_proof_one}
\min_{n+1\leq j \leq 2n} \sup_{\lambda \in \Lambda} |\lambda(E_j)|
\leq \left(\prod_{j=n+1}^{2n} \sup_{\lambda \in \Lambda} |\lambda(E_j)| \right)^{1/n}
\leq 
n^{-\min(\beta,1)/2} \cdot \Vert E_{n+1} \Vert_{\nsx} \cdot p_n^{1/\max(1,\beta)}.
\end{equation}
The result follows by combining the two chains of inequalities, and inserting the bound~\eqref{eq:pf_prod_bound} for $p_n$. 
\end{proof}
We remark that, up to different constants, the result coincides with that of Theorem 5.1 in~\cite{Wenzel2025}, where we removed the factor 
$
\log(n)^{\frac{2(\tau - \bar m) - \bar d}{2\bar d \max(1,\beta)}}$ from the right-hand side.
Moreover, we can handle piecewise-smooth manifolds $\calx_i$ by simply defining a different operator $L_i$ on each piece. This 
enters the estimate~\eqref{eq:error_bound} by increasing $C'$ through an increased value of $I$, while the estimates in~\cite{Wenzel2025} do not have a 
similar penalty.

\section{Discussion and conclusion}
In this note we revised the proof of convergence for the PDE-$\beta$-greedy algorithms for Sobolev kernels and $\beta\geq 0$ presented in~\cite{Wenzel2025}.
For the particular case of $\beta\in[0,1]$, our main result (Theorem~\ref{th:theorem}) gives the bound
\begin{equation*}
\min_{n+1\leq j \leq 2n} \norm{L_\infty(\calx)}{u - u_j}
\leq C' n^{-\frac{\tau - \bar m}{\bar d}+\frac{1-\beta}{2}}\cdot \norm{\nsx}{u - u_{n+1}} \;\;\fa n\geq I,
\end{equation*}
which coincides with the rate of convergence of generalized interpolation with uniform points for $\beta=0$ ($P$-greedy, no target-adaptivity in the selection
rule), while it provides a dimension- and smoothness-independent improvement $n^{-\beta/2}$ for $\beta\in(0,1]$, which is maximized when adaptivity is 
fully exploited ($\beta=1$,  $f$-greedy). 

Our result is formulated for a generic problem~\eqref{eq:gen_pde} based on $I\in\N$ linear operators. 
As our bound follows from rather general continuity~\eqref{eq:bounded_L} and stability assumptions~\eqref{eq:continuity}, we envision its application to more 
general settings even beyond PDE-based problems.

\subsection*{Acknowledgments}
B.H. acknowledges funding by the Deutsche Forschungsgemeinschaft (DFG, German Research Foundation) under number 540080351 and Germany’s Excellence Strategy - 
EXC 2075 - 390740016.
G.S.~is a member of INdAM-GNCS, and his work was partially supported by the project ``Perturbation problems and asymptotics for elliptic differential 
equations: variational and potential theoretic methods'' funded by the program ``NextGenerationEU'' and by MUR-PRIN, grant 2022SENJZ3. 
D. W. acknowledges funding by the International Max Planck Research School for Intelligent Systems (IMPRS-IS).

\bibliography{bibexport}
\bibliographystyle{abbrv}

\end{document}